\documentclass[12pt]{amsart}

\usepackage{fullpage, graphicx, amssymb}

\newtheorem{thm}{Theorem}[section]
\newtheorem{prop}[thm]{Proposition}

\newtheorem{definition}[thm]{Definition}
\theoremstyle{definition}

\title{A bijection between certain quarter plane walks and Motzkin paths}
\author{Karen Yeats}

\begin{document}
\maketitle

\section{Introduction}

\subsection{Problem}

In \cite{BoMi10} Bousquet-M\'elou and Mishna observed that the class of quarter plane walks using the steps $\{\searrow, \leftarrow, \uparrow\}$ is equinumerous with the class of Motzkin paths and that the class of quarter plane walks using the steps $\{\rightarrow, \searrow, \downarrow, \leftarrow, \nwarrow, \uparrow\}$ is equinumerous with the class of bicoloured Motzkin paths.  In both cases their proofs were generating function-based consequences of a larger theory and did not yield bijections between the classes.  

A bijection for the first case was given by Eu \cite{Eu10} in the context of standard Young tableaux with at most three rows.  The purpose of this short note is to give a bijection for the larger step set which restricts to Eu's bijection.  Note that this new bijection is \emph{not} the one sought by Mortimer and Prellberg in \cite{MoPr14} as is briefly discussed in the final section.

\subsection{Setup}

\begin{definition}
A class of \emph{quarter plane walks} is a class of walks in the lattice $\mathbb{Z}^2$ starting at $(0,0)$, remaining in the first quadrant $\{(i,j): i\geq 0, j\geq 0\}$, and using steps from a given set of vectors.  The \emph{length} of a quarter plane walk is the number of steps making up the walk.
\end{definition}

There are two particular classes of quarter plane walks of interest here, so let us reserve some notation for them.

\begin{definition}
Let $\mathcal{Y}$ be the class of quarter plane walks using steps from the set 
\[\{(1,-1), (-1,0), (0,1)\} = \{\searrow, \leftarrow, \uparrow\}.\]

Let $\mathcal{S}$ be the class of quarter plane walks using steps from the set \[\{(1,0), (1,-1), (0, -1), (-1,0), (-1, 1), (0,1)\} = \{\rightarrow, \searrow, \downarrow, \leftarrow, \nwarrow, \uparrow\}.\]
\end{definition}

\begin{definition}
A \emph{Motzkin path} is a walk using the steps $\{(1,1), (1,0), (1,-1)\}$ with the additional requirement that the walk ends on the $x$-axis.  

A \emph{bicolouring} of a walk is an assignment of one of two colours, say \emph{red} and \emph{black}, to each step of the walk.  
\end{definition}
Since there are no steps with negative $x$ coordinate it makes no difference if we think of Motzkin paths as restricted to the quarter plane or as restricted to the upper half plane.  We are interested in the following classes of Motzkin paths.

\begin{definition}
Let $\mathcal{M}$ be the class of Motzkin paths.
Let $\mathcal{M}_2$ be the class of bicoloured Motzkin paths.
\end{definition}

Finally, it will be convenient, in order to define the bijection, to \emph{mark} some steps of the bicoloured Motzkin paths.  In fact this is just a second bicolouring where the two colours are called \emph{marked} and \emph{unmarked}.  This class will be sufficiently useful to also give it a name.

\begin{definition}
Let $\mathcal{MM}_2$ be the class of bicoloured Motzkin paths where each step is additionally either \emph{marked} or \emph{unmarked}.
\end{definition}

The structure of the remainder of this note is as follows.  The first step is to define a map $\phi:\mathcal{S} \rightarrow \mathcal{MM}_2$.  This map is not onto but is one-to-one and $\phi^{-1}:\phi(\mathcal{S}) \rightarrow \mathcal{S}$ is easy to give.
The second step is to define $\psi:\mathcal{M}_2 \rightarrow \mathcal{MM}_2$ with the property that $f \circ \psi = \text{id}$ where $f:\mathcal{MM}_2 \rightarrow  \mathcal{M}_2$ is the map which forgets the marking.
The third step is to show $\phi(\mathcal{S}) = \psi(\mathcal{M}_2)$ and hence that $f \circ \phi: \mathcal{S} \rightarrow \mathcal{M}_2$ is a bijection with inverse $\phi^{-1} \circ \psi$.  

Once these maps are in place we can observe that $f \circ \phi$ restricted to $\mathcal{Y}$ gives all paths using only one colour and hence gives a bijection $\mathcal{Y} \rightarrow \mathcal{M}$ which turns out to be the bijection of Eu \cite{Eu10}.

\section{The maps}

\subsection{$\mathcal{S}$ to $\mathcal{MM}_2$}

\begin{definition}
  Given $w\in \mathcal{S}$ define $m=\phi(w)$ by the following procedure.

Read through $w$ from beginning to end, at each stage adding a new step to the path $m$ as follows:
\begin{enumerate}
\item If the current step in $w$ is $\uparrow$ then add a marked red $\rightarrow$ to the end of $m$.
\item If the current step in $w$ is $\rightarrow$ then add a marked black $\rightarrow$ to the end of $m$.
\item If the current step in $w$ is $\searrow$ then find the rightmost step in $m$ which is either a marked red $\rightarrow$ or a marked black $\searrow$
  \begin{enumerate}
    \item if the found step was a marked red $\rightarrow$ replace it with an unmarked red $\nearrow$
    \item if the found step was a marked black $\searrow$ replace it with an unmarked black $\rightarrow$
  \end{enumerate}
  In both cases add a marked red $\searrow$ to the end of $m$.
\item If the current step in $w$ is $\nwarrow$ then find the rightmost step in $m$ which is either a marked black $\rightarrow$ or a marked red $\searrow$
  \begin{enumerate}
    \item if the found step was a marked black $\rightarrow$ replace it with an unmarked black $\nearrow$
    \item if the found step was a marked red $\searrow$ replace it with an unmarked red $\rightarrow$
  \end{enumerate}
  In both cases add a marked black $\searrow$ to the end of $m$.
\item If the current step in $w$ is $\leftarrow$ then find the rightmost step in $m$ which is either a marked black $\rightarrow$ or a marked red $\searrow$
  \begin{enumerate}
    \item if the found step was a marked black $\rightarrow$ replace it with an unmarked black $\nearrow$
    \item if the found step was a marked red $\searrow$ replace it with an unmarked red $\rightarrow$
  \end{enumerate}
  In both cases add an unmarked red $\searrow$ to the end of $m$.
\item If the current step in $w$ is $\downarrow$ then find the rightmost step in $m$ which is either a marked red $\rightarrow$ or a marked black $\searrow$
  \begin{enumerate}
    \item if the found step was a marked red $\rightarrow$ replace it with an unmarked red $\nearrow$
    \item if the found step was a marked black $\searrow$ replace it with an unmarked black $\rightarrow$
  \end{enumerate}
  In both cases add an unmarked black $\searrow$ to the end of $m$.
\end{enumerate}
\end{definition}

Figure \ref{phi eg fig} gives a step by step example.
\begin{figure}
\includegraphics{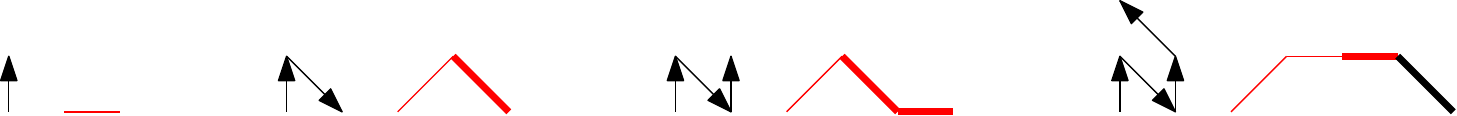}
\caption{A step by step example of $\phi$.  The bold steps of the Motzkin paths are the marked steps}\label{phi eg fig}
\end{figure}

\begin{prop}\label{endpt prop}\mbox{}
\begin{enumerate}
\item Let $m=\phi(w)$ with $w\in \mathcal{S}$.  The sum of the number of marked red $\searrow$ steps and the number of marked black $\rightarrow$ steps in $m$ is the $x$-coordinate of the end of $w$.  The sum of the number of marked red $\rightarrow$ steps and the number of marked black $\searrow$ steps in $m$ is the $y$-coordinate of the end of $w$. 
\item $\phi$ is a length-preserving map from $\mathcal{S}$ to $\mathcal{MM}_2$.
\end{enumerate}
\end{prop}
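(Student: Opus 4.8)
\subsection*{Proof proposal}

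The plan is to prove (1) and (2) simultaneously by induction on the number $k$ of steps of $w$ that have been read, carrying three invariants for the path $m$ built after the first $k$ steps: (a) the procedure was well-defined so far, i.e.\ every requested ``rightmost step of such-and-such a type'' actually existed; (b) $m$ is a legitimate marked bicoloured path of length $k$ whose underlying steps lie in $\{\nearrow,\rightarrow,\searrow\}$, which stays weakly above the $x$-axis and which ends at height $0$; and (c) writing $r_\to, b_\to, r_\searrow, b_\searrow$ for the numbers of marked red $\rightarrow$, marked black $\rightarrow$, marked red $\searrow$, marked black $\searrow$ steps of $m$, the pair $(r_\searrow + b_\to,\ r_\to + b_\searrow)$ equals the current position of $w$. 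The base case $k=0$ is trivial ($m$ empty, $w$ at the origin). Granting the invariants for $k=|w|$: invariant (b) says the final $m$ ends on the $x$-axis, so it is a bicoloured Motzkin path with markings, hence lies in $\mathcal{MM}_2$; since every one of the six clauses appends exactly one step we get $|m|=|w|$, which is (2); and (c) at $k=|w|$ is exactly (1).

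For the inductive step let $v$ be the $(k+1)$st step of $w$ and let $(x,y)$ be the position of $w$ just before it. First I would verify that the search invoked by the clause for $v$ succeeds, which is the only genuinely delicate point. If $v\in\{\searrow,\downarrow\}$ the quarter-plane condition forces $y\ge 1$, so by (c) we have $r_\to + b_\searrow\ge 1$ and $m$ therefore contains a marked red $\rightarrow$ or a marked black $\searrow$ --- precisely the steps that clauses (3) and (6) look for. If $v\in\{\nwarrow,\leftarrow\}$ then $x\ge 1$, so $b_\to + r_\searrow\ge 1$ and $m$ contains a marked black $\rightarrow$ or a marked red $\searrow$ --- precisely what clauses (4) and (5) look for; clauses (1) and (2) involve no search. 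This gives invariant (a) for $k+1$.

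Next, invariant (c) is a routine six-way check. In each clause the effect on $(r_\to,b_\to,r_\searrow,b_\searrow)$ is a fixed change: e.g.\ for $v=\searrow$, type (a) removes a marked red $\rightarrow$ and adds a marked red $\searrow$ while type (b) removes a marked black $\searrow$ and adds a marked red $\searrow$, so in either case $r_\searrow+b_\to$ goes up by $1$ and $r_\to+b_\searrow$ goes down by $1$, matching the displacement $(1,-1)$ of $\searrow$; carrying this out for all six clauses shows the change to $(r_\searrow+b_\to,\ r_\to+b_\searrow)$ always equals the step vector $v$, which with the inductive hypothesis yields (c) for $k+1$. For invariant (b), the appended step is always one of $\nearrow,\rightarrow,\searrow$ and each replacement turns a $\rightarrow$ into a $\nearrow$ or a $\searrow$ into a $\rightarrow$, so the underlying step set of $m$ stays $\{\nearrow,\rightarrow,\searrow\}$; and writing $h_i$ for the height of $m$ after its first $i$ steps, each replacement at a position $j$ raises $h_j,h_{j+1},\dots,h_k$ by exactly $1$ and so cannot introduce a point below the axis, after which clauses (3)--(6) append a $\searrow$, returning the endpoint to $h_k+1-1=h_k=0$, while clauses (1),(2) append a flat step leaving the endpoint at $h_k=0$. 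All partial heights of the new path are old heights or old heights plus $1$, hence $\ge 0$, so (b) holds for $k+1$ and the induction is complete.

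The main obstacle, as indicated above, is that (1) and (2) cannot be proved independently: establishing that $m$ is a well-formed object in $\mathcal{MM}_2$ requires knowing the searches never fail, and the searches never fail precisely because the coordinate identity of (1) forces the relevant count to be positive whenever the quarter-plane condition on $w$ demands it. Threading both statements through one induction is what makes the argument go; the remaining work is the bookkeeping of the six-clause case analysis sketched in the previous paragraph.
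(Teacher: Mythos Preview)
Your proposal is correct and follows essentially the same approach as the paper: induction on the length of $w$ with a six-way case analysis on the new step, using the coordinate identity of part (1) together with the quarter-plane constraint to guarantee that the ``find the rightmost'' searches in cases (3)--(6) succeed. Your choice to run (1) and (2) as a single simultaneous induction is slightly tidier than the paper's sequential treatment---the paper proves (1) first while tacitly assuming $\phi$ is already well-defined, then uses (1) to establish well-definedness in (2)---but the mathematical content is the same.
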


\begin{proof}
\begin{enumerate}
\item Proceed inductively.  The statement immediately holds for the two length $1$ elements of $\mathcal{S}$.  Suppose it holds for all elements of $\mathcal{S}$ of length at most $n$.  Take $w\in \mathcal{S}$ of length $n+1$ and let $w'$ be $w$ with the last step removed.  By induction the statement holds for $m'=\phi(w')$.  Consider the last step of $w$; it falls into one of the six cases in the definition of $\phi$.  

In case (1) $w$ moves one final step upwards and $m=\phi(w)$ gets one additional marked red $\rightarrow$, which contributes to the $y$-coordinate count, and so the statement holds.  Similarly in case (2) with leftwards in place of upwards and black in place of red.  In case (3) the end of $w$ moves one step right and one step down, while $m$ loses a step contributing to the $y$-coordinate count and gains one marked red $\searrow$ which contributes to the $x$-coordinate count.  Thus the statement holds for $w$.  Case (4) is analogous but reversed.  In case (5) $w$ moves one step left and $m$ loses a step contributing to the $x$-coordinate count.  Case (6) is again analogous but reversed.

In all cases the statement holds for $w$ and so by induction holds for all of $\mathcal{S}$.
\item The paths given by the construction of $\phi$ are immediately bicoloured and with each step additionally either marked or unmarked; furthermore length is preserved since each step of $w$ yields exactly one step of $\phi(w)$.  Thus we only need to check the Motzkin property and that in cases (3), (4), (5), and (6) the desired past step of $m$ is always findable.  

Again proceed inductively.  The base case is an immediate check.  Take $w\in \mathcal{S}$ and let $w'$ be $w$ with the last step removed.  By induction $m'=\phi(w')$ is a Motzkin path.  Consider the final step of $w$; it falls into one of the six cases in the definition of $\phi$.

In cases (1) and (2) the Motzkin property of $m$ holds because we are adding a $\rightarrow$ to the end of $m'$.  In cases (3) and (6) for the end of $w$ to remain in the quarter plane, the end of $w'$ must have $y$-coordinate at least $1$, and so by part (1) of this proposition the past step of $m$ needed in the construction of $\phi$ must exist.  In cases (4) and (5) the same holds with $x$ in place of $y$.  Therefore in cases (3), (4), (5), and (6), the construction of $m$ converts one step of $m'$ to end one unit higher and adds a $\searrow$ to the end of $m'$ thus preserving the Motzkin property.
Therefore $\phi:\mathcal{S} \rightarrow \mathcal{MM}_2$.
\end{enumerate}
\end{proof}

\begin{prop}\label{phi prop}
$\phi$ is one-to-one and hence $\mathcal{S}$ is in bijection with the image of $\phi$ in $\mathcal{MM}_2$.
\end{prop}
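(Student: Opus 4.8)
The plan is to notice that $\phi$ leaves in each individual step of $m=\phi(w)$ exactly enough information to recover the corresponding step of $w$; injectivity, and an explicit inverse, then follow at once, with no backwards pass over the path.

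Partition the ten step types that occur in elements of $\phi(\mathcal S)$ into six classes, one per element of the step set of $\mathcal S$: the marked red $\rightarrow$ and the unmarked red $\nearrow$ form the class of $\uparrow$; the marked black $\rightarrow$ and the unmarked black $\nearrow$ form the class of $\rightarrow$; the marked red $\searrow$ and the unmarked red $\rightarrow$ form the class of $\searrow$; the marked black $\searrow$ and the unmarked black $\rightarrow$ form the class of $\nwarrow$; the unmarked red $\searrow$ alone forms the class of $\leftarrow$; and the unmarked black $\searrow$ alone forms the class of $\downarrow$. These six classes are pairwise disjoint. The first step is then to check, by running through the six cases defining $\phi$, that (a) processing the $i$-th step of $w$ appends exactly one step to $m$, hence in position $i$, and that step already lies in the class labelled by the $i$-th step of $w$; and (b) each replacement performed while processing a later step keeps a step in its class — explicitly, marked red $\rightarrow\mapsto$ unmarked red $\nearrow$, marked black $\searrow\mapsto$ unmarked black $\rightarrow$, marked black $\rightarrow\mapsto$ unmarked black $\nearrow$, and marked red $\searrow\mapsto$ unmarked red $\rightarrow$, each occurring within a single class. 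Both are routine case checks, essentially already carried out in the proof of Proposition \ref{endpt prop}.

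From (a) and (b) it follows by induction on the number of later replacements affecting position $i$ that, for every $i$, the type of the $i$-th step of $m=\phi(w)$ lies in the class labelled by the $i$-th step of $w$; since the classes are disjoint, the map sending each step type to the element of $\mathcal S$ labelling its class reads $w$ off from $m$ step by step. Restricting this to $\phi(\mathcal S)$ gives an explicit $\phi^{-1}:\phi(\mathcal S)\to\mathcal S$ with $\phi^{-1}\circ\phi=\mathrm{id}$, so $\phi$ is one-to-one and $\mathcal S$ is in bijection with $\phi(\mathcal S)$. I expect no real obstacle here: all the content is in having designed the replacements in cases (3)--(6) so as to respect the partition, which is exactly what (b) records. (The genuinely substantive work in the note lies instead in pinning down the image $\phi(\mathcal S)$ and matching it with $\psi(\mathcal M_2)$, which comes later.)
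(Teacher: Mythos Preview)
Your proposal is correct and is essentially the paper's own proof: both construct the same left inverse by reading off each step of $w$ from the (colour, direction, marking) type of the corresponding step of $m$, using exactly the six-class partition you describe (the paper names this map $g$). Your checks (a) and (b) make explicit why $g\circ\phi=\mathrm{id}$, something the paper only asserts after listing the cases, but the underlying idea is identical.
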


\begin{proof}
First note that the construction of $\phi$ never gives a marked $\nearrow$.

Given a bicoloured, marked Motzkin path $m$ with no marked $\nearrow$, build a lattice path $w$ using the steps $\{\rightarrow, \searrow, \downarrow, \leftarrow, \nwarrow, \uparrow\}$ as follows.  
Read through $m$ from left to right at each stage adding a new step to $w$ as follows:
\begin{enumerate}
  \item If the current step in $m$ is a marked red $\rightarrow$ or an unmarked red $\nearrow$ then add $\uparrow$ to the end of $w$.
  \item If the current step in $m$ is a marked black $\rightarrow$ or an unmarked black $\nearrow$ then add $\rightarrow$ to the end of $w$.
  \item If the current step in $m$ is a marked red $\searrow$ or an unmarked red $\rightarrow$ then add $\searrow$ to the end of $w$.
  \item If the current step in $m$ is a marked black $\searrow$ or an unmarked black $\rightarrow$ then add $\nwarrow$ to the end of $w$.
  \item If the current step in $m$ is an unmarked red $\searrow$ then add $\leftarrow$ to the end of $w$.
  \item If the current step in $m$ is an unmarked black $\searrow$ then add $\downarrow$ to the end of $w$. 
\end{enumerate}
Call the map defined above $g$.
Matching up the different possibilities we see that $g\circ \phi = \text{id}$ and hence $\phi$ is one-to-one.
\end{proof}

$\phi$ has a nice shift property.
\begin{prop}\label{shift prop}
Suppose $w\in \mathcal{S}$ can be written as the concatenation of two walks $w=w_1w_2$ with $w_1,w_2\in \mathcal{S}$.  Then $\phi(w)$ can also be written as a concatenation, $\phi(w) = \phi(w_1)\phi(w_2)$.  

Conversely suppose $\phi(w) = m_1m_2$ for some $m_1,m_2 \in \mathcal{MM}_2$.  Then $m_1= \phi(w_1)$ and $m_2 = \phi(w_2)$ for some $w_1,w_2\in \mathcal{S}$ and $w=w_1w_2$.
\end{prop}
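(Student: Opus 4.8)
\emph{The plan.} I would prove the two directions separately: the forward implication by an induction on $|w_2|$, and the converse by taking the only natural decomposition of $w$, checking the one non-obvious requirement that the second piece lies in $\mathcal{S}$, and then invoking the forward implication to finish. Throughout I would use freely that a prefix of an element of $\mathcal{S}$ is again in $\mathcal{S}$, and that $g$ of a concatenation is the concatenation of the $g$'s (the construction of $g$ is step-local).

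\emph{Forward direction.} Induct on $|w_2|$, the case $|w_2|=0$ being immediate. For the inductive step write $w_2=w_2's$ with $s$ the last step; then $w_1w_2'$ (a prefix of $w$) and $w_2'$ (a prefix of $w_2$) lie in $\mathcal{S}$, so by induction $\phi(w_1w_2')=\phi(w_1)\phi(w_2')$. It then suffices to see that processing the single step $s$ on top of $\phi(w_1)\phi(w_2')$ affects only the $\phi(w_2')$ block and does so exactly as it would on top of $\phi(w_2')$ alone. If $s\in\{\uparrow,\rightarrow\}$ this is clear, since $\phi$ merely appends a $\rightarrow$. If $s\in\{\searrow,\downarrow\}$ (respectively $s\in\{\leftarrow,\nwarrow\}$), the recipe for $\phi$ hunts for the rightmost marked red $\rightarrow$ or marked black $\searrow$ (respectively marked black $\rightarrow$ or marked red $\searrow$) and then appends a $\searrow$; since $w_2's\in\mathcal{S}$, the $y$-coordinate (respectively $x$-coordinate) of the endpoint of $w_2'$ is at least $1$, so Proposition~\ref{endpt prop}(1) applied to $w_2'$ shows such a step already appears inside $\phi(w_2')$, hence so does the rightmost one. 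Thus both the modification and the appended $\searrow$ land in the $\phi(w_2')$ block, and $\phi(w)=\phi(w_1)\phi(w_2)$.

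\emph{Converse.} Given $\phi(w)=m_1m_2$ with $m_1,m_2\in\mathcal{MM}_2$, let $w_1$ be the first $|m_1|$ steps of $w$ and $w_2$ the rest, so $w=w_1w_2$ and $w_1\in\mathcal{S}$ for free. By the forward direction it is enough to prove $w_2\in\mathcal{S}$: then $\phi(w)=\phi(w_1)\phi(w_2)$, and since $|\phi(w_1)|=|w_1|=|m_1|$ by Proposition~\ref{endpt prop}(2), comparing with $\phi(w)=m_1m_2$ forces $\phi(w_1)=m_1$ and $\phi(w_2)=m_2$. To get $w_2\in\mathcal{S}$ I would follow the sequential construction of $\phi(w)$ and watch the height of the path at position $|m_1|$ from the stage where $w_1$ has just been processed. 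At that stage the path \emph{is} $\phi(w_1)$, which is a Motzkin path, so that height is $0$; at the end of the construction position $|m_1|$ is the endpoint of $m_1\in\mathcal{MM}_2$, which lies on the $x$-axis, so that height is again $0$. The key point is that each subsequent step, i.e.\ each step of $w_2$, can only weakly \emph{increase} this height: cases (1) and (2) append a $\rightarrow$ past position $|m_1|$; cases (3)--(6) replace an earlier step---turning a $\rightarrow$ into a $\nearrow$ or a $\searrow$ into a $\rightarrow$, which raises the running height from that point onwards by one---and then append a $\searrow$ past position $|m_1|$, so the height at $|m_1|$ goes up by one if the replaced step sits at position $\le|m_1|$ and is unchanged otherwise. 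A quantity that equals $0$ at both ends and never decreases is identically $0$, so no step of $w_2$ ever modifies a step at position $\le|m_1|$. Hence the marked steps among the first $|m_1|$ positions are frozen throughout the processing of $w_2$, so by Proposition~\ref{endpt prop}(1) the endpoint of $w$ after $|m_1|+k$ steps dominates coordinatewise its endpoint after $|m_1|$ steps for every $k$; translating the suffix back to the origin shows every prefix of $w_2$ ends in the first quadrant, i.e.\ $w_2\in\mathcal{S}$.

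\emph{Main obstacle.} Everything except the converse is bookkeeping with Proposition~\ref{endpt prop}; the genuinely delicate step is the claim that, once $m_1$ is known to return to the $x$-axis, building $\phi(w)$ never reaches back into the $\phi(w_1)$ block, and the monotonicity of the height at position $|m_1|$ is what makes that work. A trap to keep in mind is that a priori the first $|m_1|$ steps of $\phi(w)$ need not coincide with $\phi(w_1)$---they do, but only as a byproduct of exactly this argument---so the induction and the monotonicity observation must be phrased in terms of the actual sequential construction of $\phi(w)$ rather than by assuming the two blocks evolve independently.
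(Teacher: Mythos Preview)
Your proof is correct and follows essentially the same route as the paper: the forward direction is the same argument (the rightmost eligible step lies in the $\phi(w_2')$ block because $w_2\in\mathcal{S}$ guarantees one exists there), and in the converse your monotonicity-of-height-at-position-$|m_1|$ observation is exactly the paper's ``the end of $m_1$ would be lifted'' remark made precise. The only organizational difference is that the paper first deduces $m_1=\phi(w_1)$ directly from the no-reaching-back fact and then argues $w_2\in\mathcal{S}$ by contradiction (a step of $w_2$ leaving the quadrant would force a search that fails inside $m_2$), whereas you establish $w_2\in\mathcal{S}$ via the frozen-marks dominance argument and then invoke the forward direction---both orderings work and rest on the same key point.
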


\begin{proof}
Suppose $w=w_1w_2$ with $w_1,w_2\in \mathcal{S}$.  By the construction of $\phi$, once we have read through all the steps of $w_1$ we will have $\phi(w_1)$.  Since $\phi(w_2)$ is well-defined, it must be the case that while reading through the steps of $w_2$, in cases (3), (4), (5), and (6), the previous step can always be found among the steps of $w_2$.  Thus the steps of $w_2$ contribute $\phi(w_2)$ to the end of $\phi(w)$.  In other words $\phi(w) = \phi(w_1)\phi(w_2)$.

\medskip

Conversely suppose $\phi(w) = m_1m_2$ with $m_1, m_2\in \mathcal{MM}_2$.  Suppose $m_1$ has length $i$.  Once we have read through the first $i$ steps of $w$ in the definition of $\phi$ we will have $m_1$, because if not then some later step affects a step of $m_1$, but in such a case all subsequent steps of $m_1$ and in particular the end of $m_1$ would be lifted by one unit, and so $m_1 \not\in \mathcal{MM}_2$.  Thus $m_1 = \phi(w_1)$ where $w_1$ is the first $i$ steps of $w$.  

Let $w_2$ be the rest of $w$.  We already observed that the steps of $w_2$ do not affect $m_1$ and so $m_2 = \phi(w_2)$.  Furthermore $w_2 \in \mathcal{S}$ since otherwise at some point $w_2$ would leave the bounds of the quarter plane, at which point we would be in case (3), (4), (5), or (6), but by Proposition \ref{endpt prop} part (1) we would not be able to find an appropriate previous step in $m_2$ which is a contradiction.

Thus $\phi(w) = \phi(w_1)\phi(w_2)$ with $w_1,w_2\in \mathcal{S}$.  Then from the first part of this proposition $\phi(w) = \phi(w_1w_2)$, while by Proposition \ref{phi prop} $\phi$ is one-to-one and so $w=w_1w_2$.
\end{proof}

\subsection{$\mathcal{M}_2$ to $\mathcal{MM}_2$}

\begin{definition}
Given $s\in \mathcal{M}_2$ define $\psi(s)$ by the following procedure.
Read through $s$ from left to right marking and unmarking steps as follows.
\begin{enumerate}
  \item If the current step is a $\nearrow$ of either colour then do nothing.
  \item If the current step is a red $\rightarrow$ then find (if possible) the rightmost step before the current step which is either 
    \begin{itemize}
      \item a red $\nearrow$ which is not yet marked or unmarked or
      \item a black $\rightarrow$ which is not yet marked or unmarked.  
    \end{itemize}
    If such a step exists then set it to unmarked and do nothing with the current step.  If such a step does not exist then set the current step to marked.
  \item If the current step is a black $\rightarrow$ then find (if possible) the rightmost step before the current step which is either 
    \begin{itemize}
      \item a black $\nearrow$ which is not yet marked or unmarked or 
      \item a red $\rightarrow$ which is not yet marked or unmarked.  
    \end{itemize}
    If such a step exists then set it to unmarked and do nothing with the current step.  If such a step does not exist then set the current step to marked.
  \item If the current step is a red $\searrow$ then find (if possible) the rightmost step before the current step which is either 
    \begin{itemize}
      \item a black $\nearrow$ which is not yet marked or unmarked or 
      \item a red $\rightarrow$ which is not yet marked or unmarked.  
    \end{itemize}
    If such a step exists then set both it and the current step to unmarked.  If no such step exists then find the rightmost step before the current step which is either 
    \begin{itemize}
      \item a red $\nearrow$ which is not yet marked or unmarked or 
      \item a black $\rightarrow$ which is not yet marked or unmarked.  
    \end{itemize}
    Then set this step to unmarked and set the current step to marked.
  \item If the current step is a black $\searrow$ then find (if possible) the rightmost step before the current step which is either 
    \begin{itemize}
      \item a red $\nearrow$ which is not yet marked or unmarked or 
      \item a black $\rightarrow$ which is not yet marked or unmarked.  
    \end{itemize}
    If such a step exists then set it and the current step to unmarked.  If no such step exists then find the rightmost step before the current step which is either 
    \begin{itemize}
      \item a black $\nearrow$ which is not yet marked or unmarked or 
      \item a red $\rightarrow$ which is not yet marked or unmarked.  
    \end{itemize}
    Then set this step to unmarked and set the current step to marked.
\end{enumerate}
\end{definition}

Figure \ref{psi eg fig} gives a step by step example.
\begin{figure}
\includegraphics{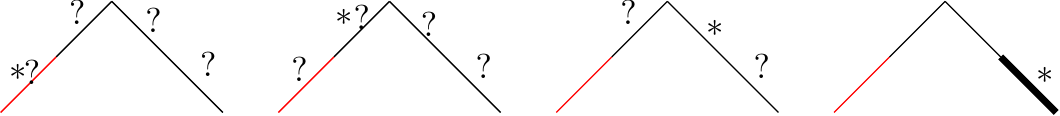}
\caption{A step by step example of $\psi$.  The bold steps are the marked steps.  The steps with ?s have unknown marking status and the step with the $*$ is the current step.}\label{psi eg fig}
\end{figure}

\begin{prop}\label{psi prop}\mbox{}
  \begin{enumerate}
  \item $\psi$ is a length-preserving map from $\mathcal{M}_2$ to $\mathcal{MM}_2$.
  \item Let $f:\mathcal{MM}_2 \rightarrow \mathcal{M}_2$ be the map which forgets the marking.  Then $f\circ \psi = \text{id}$ and hence $\mathcal{M}_2$ is in bijection with the image of $\psi$ in $\mathcal{MM}_2$.
  \end{enumerate}
\end{prop}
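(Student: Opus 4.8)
The plan is to notice first that $\psi$ leaves the underlying bicoloured Motzkin path of $s$ completely untouched---it only decorates each step as marked or unmarked---so length is automatically preserved and the Motzkin and bicolouring requirements are automatically inherited from $s$. Consequently part (2) is essentially free: the map $f$ simply erases the decorations $\psi$ added, so $f(\psi(s))=s$, giving $f\circ\psi=\text{id}$, whence $\psi$ is injective and restricts to a bijection onto $\psi(\mathcal{M}_2)$. The only thing needing genuine argument is that $\psi(s)$ actually lands in $\mathcal{MM}_2$, i.e.\ that when the procedure finishes \emph{every} step carries a marking: a priori a $\nearrow$ step, or a $\rightarrow$ step that was ``deferred'' (left alone because a partner was found), might never get set to marked or unmarked, and a $\searrow$ step might fail to find any partner in either of its searches.

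To handle this I would track, after reading each prefix of $s$, the set of steps that are currently ``not yet marked or unmarked'' in the language of the definition; call these \emph{available}. A $\searrow$ step is never a search target, so it is never available, and it is assigned a marking (unmarked, or marked, according to which of its two searches succeeds) at the instant it is the current step---provided one of those searches succeeds. Partition the available $\nearrow$ and $\rightarrow$ steps into two kinds, $A=\{$red $\nearrow$, black $\rightarrow\}$ and $B=\{$black $\nearrow$, red $\rightarrow\}$; inspecting the six cases, every search $\psi$ performs is a search for ``the rightmost available step of kind $A$'' or of kind $B$. The key claim, proved by induction on the length of the prefix, is the invariant
\[
(\#\text{ available steps of kind }A)+(\#\text{ available steps of kind }B)=h,
\]
where $h$ is the height of the endpoint of the prefix. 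The inductive step is a six-case check: a $\nearrow$ step adds one available step, of the kind matching its colour, and raises $h$ by one; a $\rightarrow$ step with no available partner of the required kind marks itself and changes nothing, while a $\rightarrow$ step with a partner unmarks that one available step of its required kind and, by being deferred, itself becomes a new available step of the \emph{other} kind---net change zero, with $h$ unchanged either way; a $\searrow$ step consumes exactly one available step and lowers $h$ by one. In each case both sides move by the same amount.

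With the invariant in hand the proposition follows quickly. Just before any $\searrow$ step the prefix ending there has height $h\ge 1$ (else $s$ would drop below the axis), so $A+B\ge 1$; if the first search of that $\searrow$ comes up empty then all available steps are of the other kind, so the fallback search succeeds---hence every $\searrow$ receives a marking. At the very end of $s$ the height is $0$, so $A+B=0$ and no available step survives. But every $\nearrow$ step and every deferred $\rightarrow$ step was available at some point and can leave the available set only by being set to unmarked, while every remaining step (the $\searrow$ steps and the $\rightarrow$ steps that marked themselves) was assigned a marking while current; so every step of $\psi(s)$ is marked or unmarked, i.e.\ $\psi(s)\in\mathcal{MM}_2$, proving (1), and (2) is the observation of the first paragraph. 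The one place requiring care is spotting the invariant: the six cases look asymmetric---the kinds $A$ and $B$ play swapped roles for red versus black and for $\rightarrow$ versus $\searrow$---and the whole point is that this asymmetry nonetheless conserves $A+B$, tracking the height exactly, so that a partner is guaranteed to exist precisely when the height forces one.
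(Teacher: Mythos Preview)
Your proof is correct and follows essentially the same approach as the paper: both arguments track the number of steps not yet marked or unmarked and observe that this count equals the current height of the Motzkin prefix, so that every $\searrow$ finds a partner and at the end no undetermined step remains. Your version is simply more explicit---you introduce the $A$/$B$ partition and verify the invariant case by case---whereas the paper compresses this into a terse counting remark (``by the Motzkin property \ldots\ every $\searrow$ step has some step not yet marked or unmarked before it'').
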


\begin{proof}
\begin{enumerate}
  \item Every time the construction meets a $\rightarrow$ step, one step at or before the current step which had not been previously marked or unmarked gets marked or unmarked.  Every time the construction meets a $\searrow$ step the current step and one earlier step which had not been previously marked or unmarked get marked or unmarked.  By the Motzkin property there are as many $\nearrow$ steps as $\searrow$ steps, and every $\searrow$ step has some step not yet marked or unmarked before it.  Thus the construction is always possible and at the end every step is marked or unmarked.  The remaining properties follow as $\psi$ does not change the underlying bicoloured Motzkin path.
  \item The construction of $\psi$ does not change the underlying bicoloured Motzkin path, only the markings.  Thus $f\circ \psi = \text{id}$.
\end{enumerate}  
\end{proof}

\subsection{The images}

\begin{prop}\label{image prop}
  $\phi(\mathcal{S}) = \psi(\mathcal{M}_2)$ 
\end{prop}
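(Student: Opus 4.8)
The goal is to show the two inclusions $\phi(\mathcal{S})\subseteq\psi(\mathcal{M}_2)$ and $\psi(\mathcal{M}_2)\subseteq\phi(\mathcal{S})$. For the first, since $\phi(w)\in\mathcal{MM}_2$ and hence $f(\phi(w))\in\mathcal{M}_2$ by Proposition \ref{endpt prop}, it is enough to prove that $\psi(f(\phi(w)))=\phi(w)$ for every $w\in\mathcal{S}$. For the second, observe that $\psi$ never produces a marked $\nearrow$ step — a $\nearrow$ is only ever left untouched or set to unmarked, and every step is eventually determined by Proposition \ref{psi prop} — so the map $g$ of Proposition \ref{phi prop} is defined on every $m\in\psi(\mathcal{M}_2)$, and it is enough to prove that $g(m)\in\mathcal{S}$ and $\phi(g(m))=m$. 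Each of these statements exhibits one image inside the other. Both will be approached by induction on length, using Proposition \ref{shift prop} together with its analogue for $\psi$, namely that $\psi(s_1s_2)=\psi(s_1)\psi(s_2)$ whenever $s_1,s_2\in\mathcal{M}_2$: this holds because, by the counting in the proof of Proposition \ref{psi prop}, the number of as-yet-undetermined steps after reading a prefix equals the current height of the Motzkin path, hence is $0$ at the end of $s_1$, so while reading $s_2$ the construction never reaches back into $s_1$.

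For $\psi(f(\phi(w)))=\phi(w)$, write $w=w'\sigma$. If $\sigma\in\{\uparrow,\rightarrow\}$ then $\sigma\in\mathcal{S}$, so $\phi(w)=\phi(w')\phi(\sigma)$ and $f(\phi(w))$ is $f(\phi(w'))$ followed by one flat step; the two shift properties and the inductive hypothesis finish this case. If $\sigma$ is one of the four descending steps, then (since $w\in\mathcal{S}$, by Proposition \ref{endpt prop}, exactly as in its proof) $\phi(w)$ is $\phi(w')$ with a single marked step at some position $j$ rewritten into the corresponding unmarked step, followed by an appended $\searrow$. The mechanism that makes this go through is that the search $\psi$ performs on reading a $\rightarrow$ coincides with the fallback search it performs on reading a $\searrow$; because of this, and because $\phi$ always chooses the \emph{rightmost} eligible step, running $\psi$ on $f(\phi(w))$ instead of on $f(\phi(w'))$ causes the step at position $j$ to perform exactly its previous internal action but to be left undetermined, this being the unique new undetermined step, and it is precisely the step that the appended $\searrow$ then consumes; a check of the four descending subcases identifies the result with $\phi(w)$.

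For $\psi(\mathcal{M}_2)\subseteq\phi(\mathcal{S})$, induct on the length of $m=\psi(s)$ according to the last step of $m$. If it is a flat step, then $m$ with that step deleted is a marked bicoloured Motzkin path ending at height $0$; applying the $\psi$-shift property to $s=f(m)$ shows this deletion lies in $\psi(\mathcal{M}_2)$ and that the last step of $m$ is in fact a \emph{marked} $\rightarrow$, so by induction the deletion equals $\phi(w')$ for some $w'\in\mathcal{S}$, and appending $\uparrow$ or $\rightarrow$ (according to colour) to $w'$ keeps it in $\mathcal{S}$ and, by Proposition \ref{shift prop}, has $\phi$-image $m$.

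The case where the last step of $m$ is a down step is the main obstacle. There $m$ with that step deleted ends at height $1$, so one cannot immediately recurse; equivalently, one must understand $\psi$ on an ``elevated'' Motzkin path $\nearrow_c\, s_1\, \searrow_{c'}$ with $s_1\in\mathcal{M}_2$, and — unlike the flat-prefix case $\rightarrow_c\, s_1$, where the leading step is marked at once and thereafter plays no role — here the leading up-step remains available to be matched from inside $s_1$, so $\psi$ does not act on $s_1$ as it would in isolation. The plan is to delete the matched pair consisting of the final down step together with the earlier step that $\psi$ pairs with it, obtaining a strictly shorter element of $\psi(\mathcal{M}_2)$, to write that via the inductive hypothesis as $\phi(w'')$, and to show that reinserting this pair corresponds under $g$ to inserting two matched steps into $w''$; verifying that the resulting walk remains in the quarter plane (here Proposition \ref{endpt prop} is used) and that $\phi$ sends it to $m$ is the bulk of the work. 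Granting both inclusions one gets $\phi(\mathcal{S})=\psi(\mathcal{M}_2)$, and the remaining claim that $f\circ\phi$ is a bijection with inverse $\phi^{-1}\circ\psi$ then follows formally.
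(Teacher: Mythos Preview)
Your overall architecture is sound, and the first inclusion is essentially correct, though you leave the crucial point implicit. You assert that after replacing the step at position $j$ (marked $\rightarrow$ becomes $\nearrow$, or marked $\searrow$ becomes $\rightarrow$) the $\psi$-computation on positions $j+1,\dots,n$ is unchanged and $j$ is the unique new undetermined step. This is true, but it is not automatic: one must rule out that some later step $k>j$ picks up position $j$ in the modified run. The reason it cannot is that (i) the original run's search at position $j$ failed, so by monotonicity there is no undetermined red $\nearrow$ or black $\rightarrow$ to the left of $j$ at any time $\ge j$; and (ii) since $j$ was chosen by $\phi$ as the \emph{rightmost} marked red $\rightarrow$ or marked black $\searrow$, any later step $k$ whose search is of the matching type cannot fail in the original run (else $k$ itself would be marked of that type to the right of $j$), so it finds some $j''>j$, and that same $j''$ beats $j$ in the modified run. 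You gesture at this with ``because $\phi$ always chooses the rightmost eligible step'', but the argument deserves to be written out.

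The second inclusion, however, is not proved. For the case where the last step of $m=\psi(s)$ is a down step you give only a plan: delete the final $\searrow$ together with the step it was paired with, recurse, then reinsert. You do not verify that the deletion lands in $\mathcal{M}_2$, let alone in $\psi(\mathcal{M}_2)$; you do not verify that $\psi$ on the shortened path agrees with $\psi(s)$ with those two positions excised (this is the same delicacy as above, and it is harder here because the deleted earlier step need not be the rightmost of its type); and you explicitly defer ``the bulk of the work'' of checking the reinserted walk stays in the quarter plane and has $\phi$-image $m$. As written this is a sketch of a strategy, not a proof.

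The paper avoids the length induction entirely. It reads $w$ and $s=f(\phi(w))$ in parallel and proves, by induction on the step index, an invariant matching the partial states of the two constructions: a step of $s$ is currently undetermined in $\psi$ if and only if the corresponding step of the partial $\phi$-output is a marked $\rightarrow$ or marked $\searrow$ that will later be lifted, with the same colour; and the lifting step in $\phi$ is exactly the step that later unmarks it in $\psi$. This invariant is what makes your ``rightmost'' argument work for free, and it handles both inclusions symmetrically (the reverse direction just starts from $s$, sets $w=g(\psi(s))$, and runs the same invariant). If you want to salvage your approach, the cleanest fix is to prove exactly this invariant; once you have it, the pair-deletion gymnastics become unnecessary.
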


\begin{proof}
Let $f:\mathcal{MM}_2\rightarrow \mathcal{M}_2$ be the map which forgets the marking.

Take $w\in \mathcal{S}$.  Let $m=\phi(w)$ and $s=f(m)$.  We want to show that $\psi(s) = m$.  Read through $w$ and $s$ step by step following the definitions of $\phi$ and $\psi$ respectively.  

Claim: 
\begin{enumerate}
  \item If the current step in $s$ is a $\nearrow$ and hence has unknown marking status, then the current step in the construction of $m$ is a marked $\rightarrow$ of the same colour which will later be lifted to a $\nearrow$.  This later lifting step will be the same step which unmarks the current step in the construction of $\psi$.
  \item If the current step in $s$ is a $\rightarrow$ which has unknown marking status, then the current step in the construction of $m$ is a marked $\searrow$ of the same colour which will later be lifted to a $\rightarrow$.  This later lifting step will be the same step which unmarks the current step in the construction of $\psi$.
  \item If the current step in $s$ is a marked $\rightarrow$ then the current step in the construction of $m$ is a marked $\rightarrow$ of the same colour which will not later be raised.
  \item If the current step in $s$ is a $\searrow$ then the current step in the construction of $m$ is a $\searrow$ of the same colour and marking status. 
\end{enumerate}

Now prove the claim by induction on the steps.  At a given step we are in one of the following situations
\begin{itemize}
  \item The current step in $s$ is a $\nearrow$.  This can only come from a marked $\rightarrow$ in the partial construction of $m$ which will later be lifted to a $\nearrow$.  
  \item The current step in $s$ is a $\rightarrow$ which caused a previous step to be unmarked.  By the induction hypothesis, the conditions in the definitions of $\phi$ and $\psi$ for finding a previous step will find the same step and so this previous step is the step raised by the current step in the definition of $\phi$ and hence is also unmarked at this same point by $\phi$.
    \item The current step in $s$ is a $\rightarrow$ which has just been marked.  Then as the condition in (2) or (3) of the definition of $\psi$ was not fulfilled, by the correspondence of the induction hypothesis, there was no previous match for this step in the construction of $\phi$ and so it corresponds to a marked $\rightarrow$ in $m$.
    \item The current step in $s$ is a $\searrow$.  By the induction hypothesis, the first set of conditions in the definition of $\psi$ corresponds to the conditions to match a current unmarked $\searrow$ in the definition of $\phi$, and the matching step is raised, hence unmarked in both $\psi$ and $\phi$.  If this match is not possible then the $\searrow$ must be marked and hence raises a step using the opposite condition as given in both $\psi$ and $\phi$.
The unmarked possibility for the current step must be considered first in the definition of $\psi$ as otherwise when the step which should match with the found occurs it would match with the current step instead which would change the shape of the Motzkin path.
\end{itemize}
Using the above we can conclude that $\psi(s)=m$ and so $\phi(\mathcal{S}) \subseteq \psi(\mathcal{M}_2)$.

In the other direction take $s\in \mathcal{M}_2$.  Let $m=\psi(s)$.  Note that every $\nearrow$ of $m$ is unmarked and so we can let $w=g(m)$ where $g$ is as defined in the proof of Proposition \ref{phi prop}.  The correspondence from the claim again holds, and can again be proved inductively by reading through $w$ and $s$ step by step.  Hence $\phi(w) = m$ and so $\psi(\mathcal{M}_2) \subseteq \phi(\mathcal{S})$.

Therefore $\phi(\mathcal{S}) = \psi(\mathcal{M}_2)$.
\end{proof}

\subsection{Results}

\begin{thm}
Let $f:\mathcal{MM}_2\rightarrow \mathcal{M}_2$ be the map which forgets the marking.  Then $f\circ \phi:\mathcal{S} \rightarrow \mathcal{M}_2$ is a length-preserving bijection.
\end{thm}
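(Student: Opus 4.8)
The plan is to glue together the four propositions already proved, using the section–retraction structure they provide. Let $g:\phi(\mathcal{S})\to\mathcal{S}$ be the map of Proposition \ref{phi prop}, so that $g\circ\phi=\text{id}_{\mathcal{S}}$ and hence $\phi$ is a bijection from $\mathcal{S}$ onto $\phi(\mathcal{S})$. From $f\circ\psi=\text{id}_{\mathcal{M}_2}$ (Proposition \ref{psi prop}(2)) one similarly gets that $f$ and $\psi$ are mutually inverse bijections between $\psi(\mathcal{M}_2)$ and $\mathcal{M}_2$: the identity $f\circ\psi=\text{id}$ is one composite, and for any $m=\psi(s)\in\psi(\mathcal{M}_2)$ we have $\psi(f(m))=\psi(f(\psi(s)))=\psi(s)=m$, which is the other. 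The crucial input is Proposition \ref{image prop}, $\phi(\mathcal{S})=\psi(\mathcal{M}_2)$, which says these two images are the same set; in particular $f$ restricts to a bijection $\phi(\mathcal{S})\to\mathcal{M}_2$.

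With these observations the theorem falls out. The composite
\[
\mathcal{S}\ \xrightarrow{\ \phi\ }\ \phi(\mathcal{S})=\psi(\mathcal{M}_2)\ \xrightarrow{\ f\ }\ \mathcal{M}_2
\]
is a composition of two bijections, hence a bijection, and it is exactly $f\circ\phi$. Its inverse is $\phi^{-1}\circ\psi$, and this makes sense precisely because $\phi^{-1}=g$ is defined on $\phi(\mathcal{S})$ and, by Proposition \ref{image prop}, that set contains the image of $\psi$. Length-preservation is immediate: $\phi$ is length-preserving by Proposition \ref{endpt prop}(2), and $f$ merely forgets markings, leaving the underlying path and hence its length unchanged.

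If one prefers to see injectivity and surjectivity spelled out directly: if $f(\phi(w_1))=f(\phi(w_2))$, write $\phi(w_i)=\psi(s_i)$ (possible by Proposition \ref{image prop}) and apply $f$ to get $s_i=f(\phi(w_i))$, so $s_1=s_2$, whence $\phi(w_1)=\phi(w_2)$ and then $w_1=w_2$ by Proposition \ref{phi prop}; and given $s\in\mathcal{M}_2$, Proposition \ref{image prop} lets us write $\psi(s)=\phi(w)$ for some $w\in\mathcal{S}$, and then $f(\phi(w))=f(\psi(s))=s$.

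There is no genuine obstacle left here, since all the work lives in Proposition \ref{image prop} and in the constructions of $\phi$ and $\psi$; the only thing requiring care is invoking the image equality $\phi(\mathcal{S})=\psi(\mathcal{M}_2)$ at exactly the points where one must pass between the ``$\phi$ side'' and the ``$\psi$ side''. I would close with the promised remark: inspecting the definition of $\phi$ shows that $\phi(w)$ uses a black step if and only if $w$ uses one of the steps $\rightarrow,\downarrow,\nwarrow$, so $w\in\mathcal{Y}$ if and only if $f(\phi(w))\in\mathcal{M}$; hence $f\circ\phi$ restricts to a length-preserving bijection $\mathcal{Y}\to\mathcal{M}$, which is the bijection of Eu \cite{Eu10}.
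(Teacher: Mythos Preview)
Your proof is correct and follows exactly the approach of the paper: assemble Propositions \ref{endpt prop}(2), \ref{phi prop}, \ref{psi prop}, and \ref{image prop} into the composite bijection, with the image equality as the key hinge. Your final paragraph on the restriction to $\mathcal{Y}$ is also correct but belongs to the next theorem rather than this one.
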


\begin{proof}
Immediate from Proposition \ref{endpt prop} (2) along with Propositions \ref{phi prop}, \ref{psi prop}, and \ref{image prop}.
\end{proof}

\begin{thm}
Let $f$ be as in the previous theorem.
The image of $\mathcal{Y}$ under $f\circ \phi$ is the set of bicoloured Motzkin paths using only red steps.  Thus $f \circ \phi$ also gives a length-preserving bijection between $\mathcal{Y}$ and $\mathcal{M}$.
\end{thm}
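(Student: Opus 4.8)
The plan is to prove two inclusions and then invoke the previous theorem. Since that theorem already establishes that $f\circ\phi$ is a length-preserving bijection from all of $\mathcal{S}$ onto all of $\mathcal{M}_2$, its restriction to $\mathcal{Y}\subseteq\mathcal{S}$ is automatically a length-preserving bijection onto its image; so it suffices to identify that image with the set of bicoloured Motzkin paths using only red steps, which we then identify with $\mathcal{M}$ by colouring every step of a Motzkin path red.

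First I would prove the forward inclusion by induction on the length of $w\in\mathcal{Y}$, tracking the partial path that $\phi$ builds. The invariant is that after reading any prefix of $w$ every step of the current path is red. The only steps available in $\mathcal{Y}$ are $\uparrow$, $\searrow$, $\leftarrow$, which trigger cases (1), (3), (5) of the definition of $\phi$; cases (2), (4), (6) never occur. A marked black $\rightarrow$ is created only in case (2) and a marked black $\searrow$ only in case (4), so under the inductive hypothesis neither is ever present. Hence in case (3) the rightmost "marked red $\rightarrow$ or marked black $\searrow$" must be a marked red $\rightarrow$, replaced by an unmarked red $\nearrow$, with a marked red $\searrow$ appended; likewise in case (5) the found step is a marked red $\searrow$, replaced by an unmarked red $\rightarrow$, with an unmarked red $\searrow$ appended. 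In all three cases only red steps are produced, so the induction closes and $f\circ\phi(\mathcal{Y})$ consists of red-only Motzkin paths.

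For surjectivity I would take $s\in\mathcal{M}$, regarded as a bicoloured Motzkin path all of whose steps are red, and use the inverse $\phi^{-1}\circ\psi=g\circ\psi$ of $f\circ\phi$ supplied by Propositions \ref{phi prop}, \ref{psi prop}, and \ref{image prop}. By Proposition \ref{psi prop}, $\psi$ only changes the marking and never the underlying bicoloured path, so $\psi(s)$ is again red-only. Then inspecting the six cases defining the map $g$ of Proposition \ref{phi prop}, the black cases (2), (4), (6) cannot occur on a red-only path, while the red cases (1), (3), (5) output exactly $\uparrow$, $\searrow$, $\leftarrow$; hence $w=g(\psi(s))\in\mathcal{Y}$ and $f\circ\phi(w)=s$. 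Therefore $f\circ\phi$ maps $\mathcal{Y}$ onto the red-only bicoloured Motzkin paths.

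The colour-tracking inductions above are routine once the invariant is isolated, so I do not expect a serious obstacle; the one point that needs care is the surjectivity half, where one must be sure that pushing a red-only path through $\psi$ keeps everything red — but this is immediate from the fact, recorded in the proof of Proposition \ref{psi prop}, that $\psi$ alters only the marking. Combining the two inclusions with the bijectivity of $f\circ\phi$ on $\mathcal{S}$ and with length-preservation from Proposition \ref{endpt prop}(2) then yields the asserted length-preserving bijection $\mathcal{Y}\to\mathcal{M}$.
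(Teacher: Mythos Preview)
Your argument is correct. The forward inclusion is exactly the paper's observation, just written out in more detail: $\phi$ never recolours an existing step, and cases (1), (3), (5) each append a red step, so walks in $\mathcal{Y}$ produce only red Motzkin paths.

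For surjectivity you take a genuinely different route from the paper. The paper argues on the complement: cases (2), (4), (6) each append a black step, so any walk in $\mathcal{S}\smallsetminus\mathcal{Y}$ maps to a path with at least one black step; since $f\circ\phi$ is already known to be a bijection $\mathcal{S}\to\mathcal{M}_2$, the red-only paths must therefore be exactly the image of $\mathcal{Y}$. You instead push a red-only path through the explicit inverse $g\circ\psi$ and check that only the $\mathcal{Y}$-outputs of $g$ are triggered. Both are short and valid; the paper's version is a touch more economical because it never needs to invoke $\psi$ or $g$, while yours has the minor virtue of exhibiting the preimage constructively.
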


\begin{proof}
  Consider the six cases in the construction of $\phi$.  No case changes the colour of a previous step.  Cases (1), (3), and (5) involve steps used in $\mathcal{Y}$ and add a red step to the end of the Motzkin path.  Thus the image of $\mathcal{Y}$ under $f\circ \phi$ consists of Motzkin paths using only red steps.  Cases (2), (4), and (6) involve steps not used in $\mathcal{Y}$ and add a black step to the end of the Motzkin path.  Thus any element of $\mathcal{S}\smallsetminus \mathcal{Y}$ under $f\circ \phi$ gives a Motzkin path with at least one black step.  The result follows.
\end{proof}

\section{Comments}

Eu in \cite{Eu10} gives a size-preserving bijection between the set of standard Young tableau with at most 3 rows and the set of Motzkin paths.  There is a well-known bijection between Standard Young tableau with at most 3 rows and $\mathcal{Y}$ given by the following rule:
step $i$ of the walk is
\begin{itemize}
  \item $\uparrow$ if $i$ appears in row $1$ of the Young tableau,
  \item $\searrow$ if $i$ appears in row $2$ of the Young tableau and
  \item $\leftarrow$ if $i$ appears in row $3$ of the Young tableau.
\end{itemize}
The standard Young property is then equivalent to the quarter plane condition on the walk.  Equivalently, the Yamanouchi word of the tableau is the walk using the alphabet $\{\uparrow, \searrow, \leftarrow\}$ in place of $\{1,2,3\}$.  In view of this, Eu's map associating a Motzkin path to a word (\cite{Eu10} p465) is identical to $(\phi^{-1}\circ \psi)|_{\mathcal{Y}}$;  the labelling and looking back in Eu's step A1 is the finding and marking of $\psi$.

Mortimer and Prellberg show (\cite{MoPr14}, Corollary 4), using generating function techniques, that 
\begin{itemize}
  \item walks in $\mathcal{S}$ which never go outside the triangle bounded by the axes and $x+y=2H+1$ are equinumerous with bicoloured Motzkin paths in a strip of height $H$ and
  \item walks in $\mathcal{S}$ which never go outside the triangle bounded by the axes and $x+y=2H$ are equinumerous with bicoloured Motzkin paths in a strip of height $H$ which never have a horizontal step at height $H$.
\end{itemize}
The bijection given here does not have this height property.  A natural problem then is to find a bijection which does.  After extensive playing around, I feel it is likely that there is such a bijection in which the Motzkin path is built step by step from the walk and where each new step modifies the path so far by possibly raising a step, with one small exception when $H=1$.  This would give a bijection with a similar flavour to the one described in this note.  Unfortunately the raising rule, if it exists, is obscure at present.

\appendix

\section{Examples of $\phi$}

To aid the reader in their intuition this appendix contains some examples of $\phi$.  Observe that reflecting a walk $s\in \mathcal{S}$ is equivalent to swapping the colours in $\phi(s)$, so only walks beginning $\uparrow$ will be shown.  In all the examples marked steps are fat and unmarked steps are thin.  Figure \ref{up to 3 fig} gives $\phi$ on such walks up to length 3.

\begin{figure}
\includegraphics{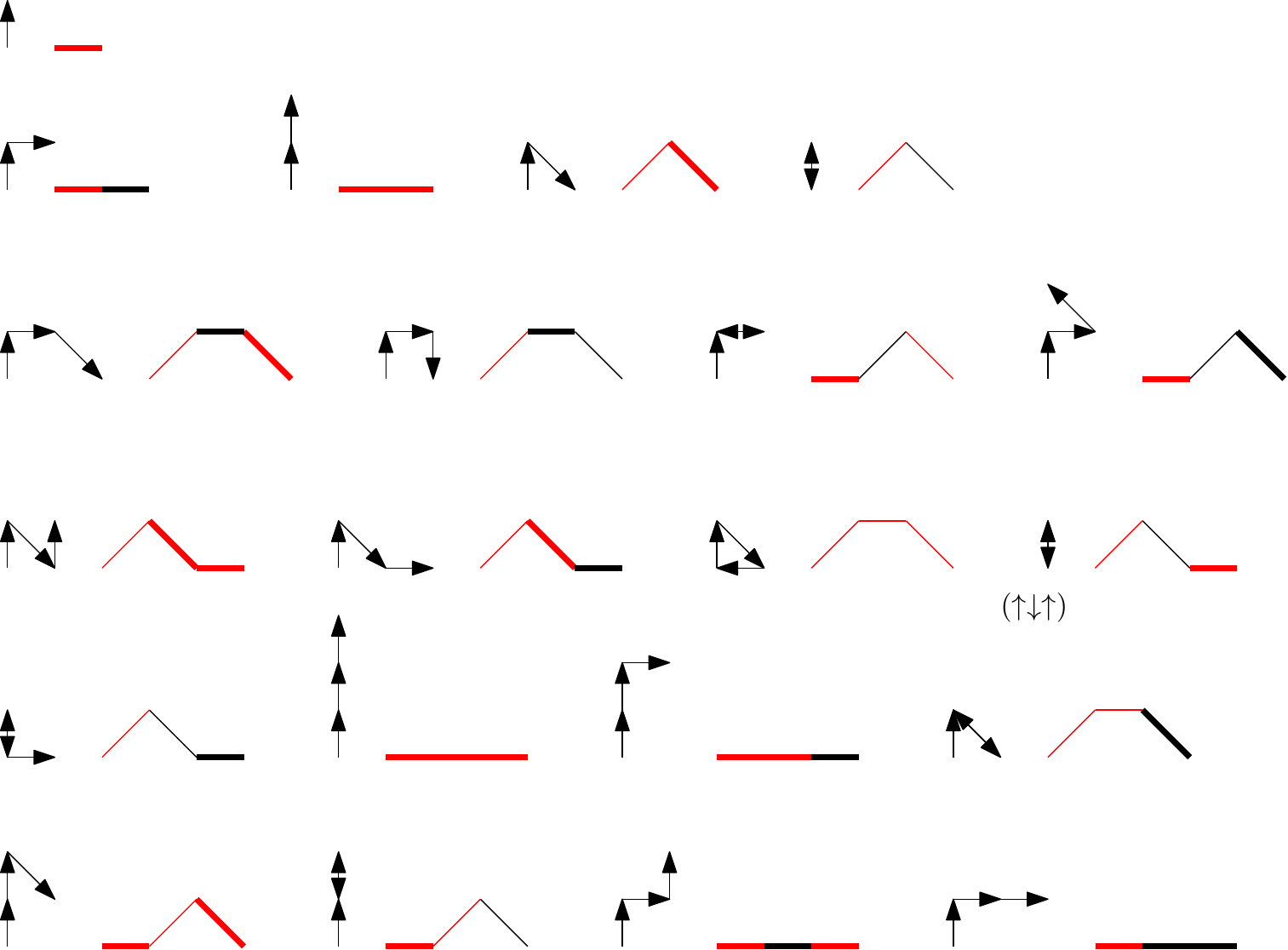}
\caption{$\phi$ on walks from $\mathcal{S}$ beginning with $\uparrow$ of length at most $3$}\label{up to 3 fig}
\end{figure}

In view of Proposition \ref{shift prop} only walks whose Motzkin paths have no interior returns to the $x$-axis  are shown for length 4, see Figure \ref{size 4 fig}.  Finally, to give a partial sense of the length 5, $\phi$ on the walks from $\mathcal{Y}$ of length 5 with Motzkin paths with no interior returns to the $x$-axis is given in Figure \ref{size 5 fig}.

\begin{figure}
\includegraphics{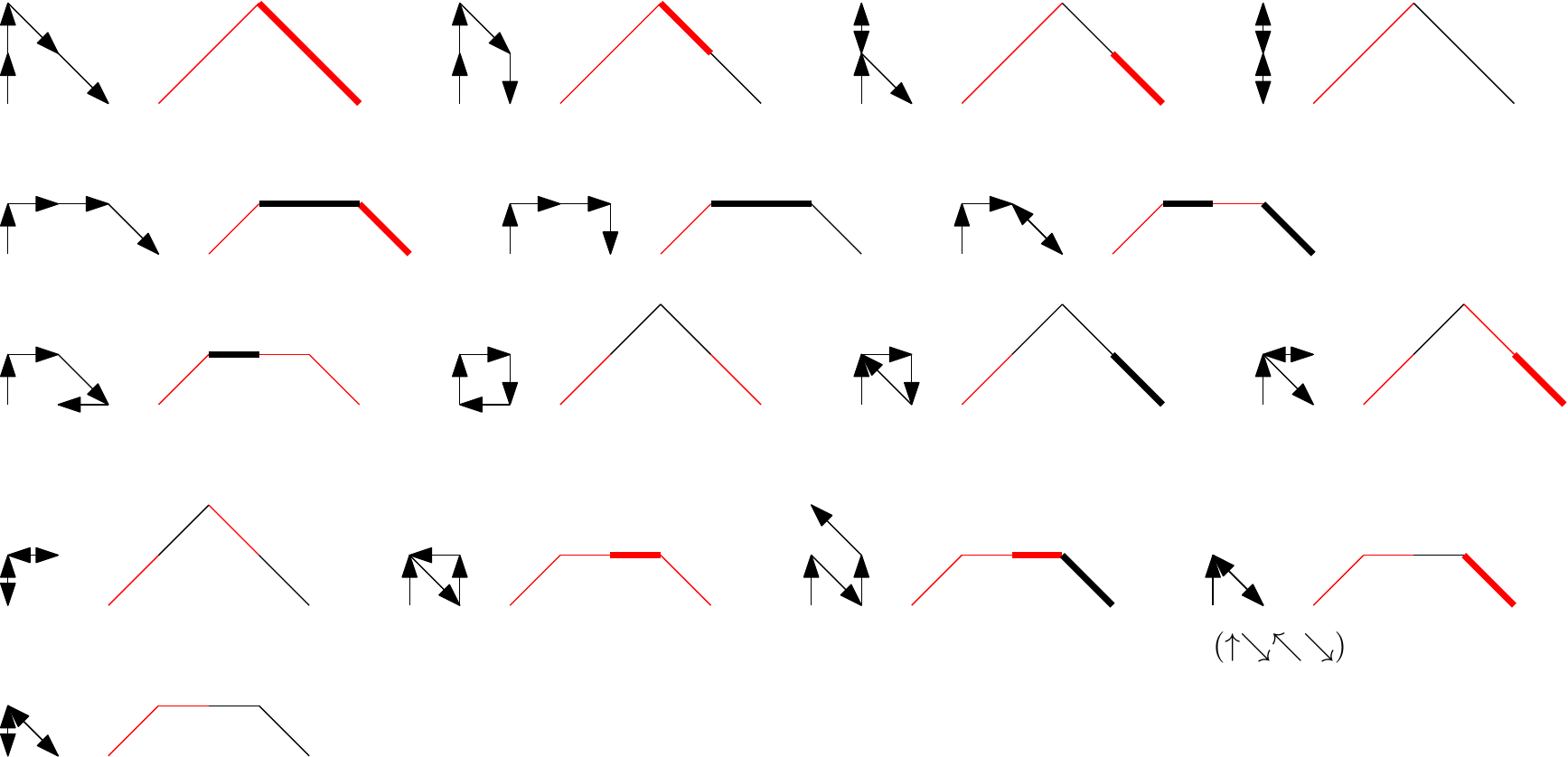}
\caption{$\phi$ on walks from $\mathcal{S}$ of length $4$ beginning with $\uparrow$ and whose Motzkin paths have no interior returns to the $x$-axis.}\label{size 4 fig}
\end{figure}

\begin{figure}
\includegraphics{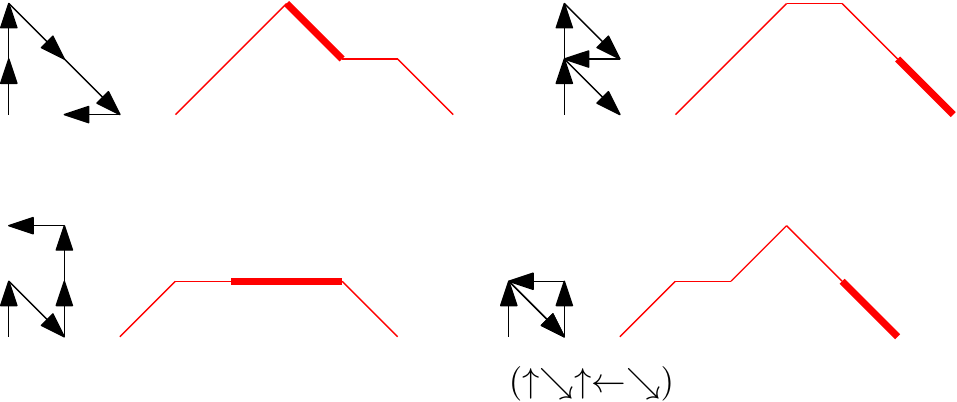}
\caption{$\phi$ on walks from $\mathcal{Y}$ of length $5$ beginning with $\uparrow$ and whose Motzkin paths have no interior returns to the $x$-axis.}\label{size 5 fig}
\end{figure}

\bibliographystyle{plain}
\bibliography{main}

\end{document}